\theoremstyle{plain}
\newtheorem{theorem}{Theorem}[section]
\theoremstyle{definition}
\newtheorem{example}[theorem]{Example}
\newcommand{\abs}[1]{\lvert#1\rvert}
\newcommand{\norm}[1]{\lVert#1\rVert}
\newcommand{\bigabs}[1]{\bigl\lvert#1\bigr\rvert}
\newcommand{\bignorm}[1]{\bigl\lVert#1\bigr\rVert}
\newcommand{\Bigabs}[1]{\Bigl\lvert#1\Bigr\rvert}
\newcommand{\Bignorm}[1]{\Bigl\lVert#1\Bigr\rVert}
\newcommand{\term}[1]{{\textit{\textbf{#1}}}}   
\renewcommand{\mid}{\::\:}
\DeclareSymbolFont{bbold}{U}{bbold}{m}{n}
\DeclareSymbolFontAlphabet{\mathbbold}{bbold}
\def\bx{\boldsymbol{x}}
\def\bs{\boldsymbol{s}}
\def\bt{\boldsymbol{t}}
\def\ba{\boldsymbol{a}}
\DeclareMathOperator{\Range}{Range}
\renewcommand{\le}{\leqslant}
\renewcommand{\ge}{\geqslant}
\begin{document}

\title[Krivine's Function Calculus and Bochner integration]
{Krivine's Function Calculus\\ and Bochner integration}

\author{V.G. Troitsky}
\address{Department of Mathematical and Statistical Sciences,
         University of Alberta, Edmonton, AB, T6G\,2G1, Canada.}
\email{troitsky@ualberta.ca}

\author{M.S. T\"urer}
\address{Department of Mathematics and Computer Science, \.Istanbul
K\"ult\"ur University, Bak\i rk\"oy 34156, \.Istanbul, Turkey}
\email{m.turer@iku.edu.tr}

\thanks{The first author was supported by an NSERC grant.}

\keywords{Banach lattice, Function Calculus, Bochner integral}

\subjclass[2010]{Primary: 46B42. Secondary: 46A40}

\date{\today}

\begin{abstract}
  We prove that Krivine's Function Calculus is compatible with
  integration. Let $(\Omega,\Sigma,\mu)$ be a finite measure space,
  $X$ a Banach lattice, $\bx\in X^n$, and
  $f\colon\mathbb R^n\times\Omega\to\mathbb R$ a function such that
  $f(\cdot,\omega)$ is continuous and positively homogeneous for every
  $\omega\in\Omega$, and $f(\bs,\cdot)$ is integrable for every
  $\bs\in\mathbb R^n$. Put $F(\bs)=\int f(\bs,\omega)d\mu(\omega)$ and
  define $F(\bx)$ and $f(\bx,\omega)$ via Krivine's Function
  Calculus. We prove that under certain natural assumptions
  $F(\bx)=\int f(\bx,\omega)d\mu(\omega)$, where the right hand side
  is a Bochner integral.
\end{abstract}

\maketitle

\section{Motivation}

In \cite{Kalton:12}, the author defines a real-valued
function of two real or complex variable via
\begin{math}
  F(s,t)=\int_0^{2\pi}\bigabs{s+e^{i\theta}t}d\theta.
\end{math}
This is a positively homogeneous continuous function. Therefore, given two
vectors $u$ and $v$ in a Banach lattice~$X$, one may apply Krivine's
Function Calculus to $F$ and consider $F(u,v)$ as an element of~$X$.
The author then claims that
\begin{equation}
  \label{Kalton}
  F(u,v)=\int_0^{2\pi}\bigabs{u+e^{i\theta}v}d\theta,
\end{equation}
where the right hand side here is understood as a Bochner integral;
this is used later in \cite{Kalton:12} to conclude that
\begin{math}
  \bignorm{F(u,v)}\le
   \int_0^{2\pi}\bignorm{u+e^{i\theta}v}d\theta
\end{math}
because Bochner integrals have this property:
\begin{math}
  \bignorm{\int f}\le\int\norm{f}.
\end{math}
A similar exposition is also found
in~\cite[p.~146]{Davis:84}. Unfortunately, neither \cite{Kalton:12}
nor \cite{Davis:84} includes a proof of~\eqref{Kalton}.  In this note,
we prove a general theorem which implies~\eqref{Kalton} as a special
case. 

\section{Preliminaries}

We start by reviewing the construction of Krivine's Function Calculus
on Banach lattices; see~\cite[Theorem~1.d.1]{Lindenstrauss:79} for
details. For Banach lattice terminology, we refer the reader
to~\cite{Abramovich:02,Aliprantis:06a}. 

Fix $n\in\mathbb N$. A function
$F\colon\mathbb R^n\to\mathbb R$ is said to be \term{positively
  homogeneous} if
\begin{displaymath}
  F(\lambda t_1,\dots,\lambda t_n)
  =\lambda F(t_1,\dots,t_n)
  \mbox{ for all }t_1,\dots,t_n\in\mathbb R\mbox{ and }\lambda\ge 0. 
\end{displaymath}
Let $H_n$ be the set of all continuous positively homogeneous
functions from $\mathbb R^n$ to~$\mathbb R$. Let $S_\infty^n$ be the
unit sphere of $\ell_\infty^n$, that is,
\begin{displaymath}
  S_\infty^n=\bigl\{(t_1,\dots,t_n)\in\mathbb R^n\mid
  \max\limits_{i=1,\dots,n}\abs{t_i}=1\bigr\}.
\end{displaymath}
It can be easily verified that the restriction map $F\mapsto
F_{|S_\infty^n}$ is a lattice isomorphism from $H_n$ onto
$C(S_\infty^n)$. Hence, we can identify $H_n$ with $C(S_\infty^n)$. For each
$i=1,\dots,n$, the $i$-th coordinate projection
$\pi_i\colon\mathbb R^n\to\mathbb R$ clearly belongs to~$H_n$. 

Let $X$ be a (real) Banach lattice and $\bx=(x_1,\dots,x_n)\in
X^n$. Let $e\in X_+$ be such that $x_1,\dots,x_n$ belong to $I_e$, the
principal order ideal of $e$. For example, one could take
$e=\abs{x_1}\vee\dots\vee\abs{x_n}$. By Kakutani's representation
theorem, the ideal $I_e$ equipped with the norm
\begin{displaymath}
    \norm{x}_e=\inf\bigl\{\lambda>0\mid\abs{x}\le\lambda e\bigr\}
\end{displaymath}
is lattice isometric to $C(K)$ for some compact Hausdorff $K$.  Let
$F\in H_n$. Interpreting $x_1,\dots,x_n$ as elements of $C(K)$, we can
define $F(x_1,\dots,x_n)$ in $C(K)$ as a composition. We may view it
as an element of $I_e$ and, therefore, of $X$; we also denote it by
$\widetilde F$ or $\Phi(F)$. It may be shown that, as an element of
$X$, it does not depend on the particular choice of $e$. This results
in a (unique) lattice homomorphism $\Phi\colon H_n\to X$ such that
$\Phi(\pi_i)=x_i$. The map $\Phi$ will be referred to as
\term{Krivine's function calculus}. This construction allows one to
define expressions like
$\Bigl(\sum_{i=1}^n\abs{x_i}^p\Bigr)^{\frac1p}$ for $0<p<\infty$ in
every Banach lattice $X$; this expression is understood as $\Phi(F)$
where
$F(t_1,\dots,t_n)=\Bigl(\sum_{i=1}^n\abs{t_i}^p\Bigr)^{\frac1p}$. Furthermore,
\begin{equation}
  \label{FC-norms}
  \bignorm{F(\bx)}\le
  \norm{F}_{C(S_\infty^n)}\cdot\Bignorm{\bigvee_{i=1}^n\abs{x_i}}.
\end{equation}

Let $L_n$ be the sublattice of $H_n$ or, equivalently, of
$C(S_\infty^n)$, generated by the coordinate projections
$\pi_i$ as $i=1,\dots,n$. It follows from the Stone-Weierstrass
Theorem that $L_n$ is dense in $C(S_\infty^n)$. It follows from
$\Phi(\pi_i)=x_i$ that $\Phi(L_n)$ is the sublattice generated by
$x_1,\dots,x_n$ in $X$, hence $\Range\Phi$ is contained in
the closed sublattice of $X$ generated by $x_1,\dots,x_n$. It follows
from, e.g., Exercise~8 on \cite[p.204]{Aliprantis:06a} that this
sublattice is separable.

\medskip

Let $(\Omega,\Sigma,\mu)$ be a finite measure space and $X$ a Banach
space. A function $f\colon\Omega\to X$ is \emph{measurable} if there
is a sequence $(f_n)$ of simple functions from $\Omega$ to $X$ such
that $\lim_n\norm{f_n(\omega)-f(\omega)}=0$ almost everywhere. If, in
addition, $\int\norm{f_n(\omega)-f(\omega)}d\mu(\omega)\to 0$ then $f$ is
\term{Bochner integrable} with $\int_Af\,d\mu=\lim_n\int_Af_n\,d\mu$
for every measurable set~$A$. In the following theorem, we collect a
few standard facts about Bochner integral for future reference; we
refer the reader to~\cite[Chapter II]{Diestel:77} for proofs and
further details.

\begin{theorem}\label{Boch}
  Let $f\colon\Omega\to X$.
  \begin{enumerate}
  \item\label{Boch-lim} If $f$ is the almost everywhere limit of a
    sequence of measurable functions then $f$ is measurable.
  \item\label{Boch-norming} If $f$ is separable-valued and there is a
    norming set $\Gamma\subseteq X^*$ such that $x^*f$ is measurable
    for every $x^*\in\Gamma$ then $f$ is measurable.
  \item\label{Boch-int} A measurable function $f$ is Bochner integrable
    iff $\norm{f}$ is integrable.
  \item\label{Boch-L1} If $f(\omega)=u(\omega)x$ for some fixed
    $x\in X$ and $u\in L_1(\mu)$ and for all $\omega$ then $f$ is
    measurable and Bochner integrable.
  \item\label{Boch-op} If $f$ is Bochner integrable and
    $T\colon X\to Y$ is a bounded operator from $X$ to a Banach space
    $Y$ then
  \begin{math}
  T\bigl(\int f\,d\mu\bigr)=
  \int Tf\,d\mu.
  \end{math}
  \end{enumerate}
\end{theorem}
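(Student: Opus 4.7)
The plan is to verify the five items in order, each by approximation by simple functions, so that later parts can invoke earlier ones.

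For (\ref{Boch-lim}) I would use a diagonal extraction: if $f_k\to f$ almost everywhere and each $f_k$ is itself an almost-everywhere limit of simple functions $(s_{k,j})_j$, choose $j_k$ so that the set where $\norm{s_{k,j_k}(\omega)-f_k(\omega)}\ge 1/k$ has measure at most $2^{-k}$, and invoke Borel--Cantelli to conclude $s_{k,j_k}\to f$ almost everywhere. Part (\ref{Boch-norming}) is Pettis's measurability theorem: fix a countable dense $\{x_n\}$ in $f(\Omega)$, extract a countable $\Gamma_0\subseteq\Gamma$ norming the separable closed linear span of $f(\Omega)$, and observe that $\omega\mapsto\norm{f(\omega)-x_n}=\sup_{x^*\in\Gamma_0}\abs{x^*(f(\omega)-x_n)}$ is a countable supremum of measurable functions; one builds simple approximants of $f$ by partitioning $\Omega$ according to which $x_n$ best approximates $f(\omega)$.

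For (\ref{Boch-int}), the forward direction is the reverse triangle inequality applied to the $L_1$-Cauchy sequence defining the integral. For the converse, take measurable simple $s_n\to f$ almost everywhere, truncate to $t_n=s_n\chi_{A_n}$ where $A_n=\{\omega:\norm{s_n(\omega)}\le 2\norm{f(\omega)}\}$ so that $\norm{t_n-f}\le 3\norm{f}$ pointwise, and apply dominated convergence. Part (\ref{Boch-L1}) is then immediate: simple $u_n\to u$ in $L_1$ give simple functions $\omega\mapsto u_n(\omega)x$ converging to $f$ almost everywhere, and $\norm{f}=\abs{u}\norm{x}$ is integrable. For (\ref{Boch-op}), the identity is linear and trivial on simple functions; $Tf$ is measurable by (\ref{Boch-lim}) via $Ts_n\to Tf$, Bochner integrable by (\ref{Boch-int}) since $\norm{Tf}\le\norm{T}\norm{f}$, and the general identity follows by continuity of $T$ when passing to the limit in $T(\int s_n\,d\mu)=\int Ts_n\,d\mu$.

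The main obstacle is (\ref{Boch-norming}): the technical heart is producing a single countable $\Gamma_0\subseteq\Gamma$ that simultaneously norms every vector of the form $f(\omega)-x_n$. This is where separability of the range of $f$ is essential, since without it the expression $\norm{f(\omega)-x_n}$ would only be an uncountable supremum of measurable functions and its measurability would be in doubt.
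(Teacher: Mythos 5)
Your proposal is correct, but note that the paper does not prove this theorem at all: it is stated as a list of standard facts with a citation to Diestel--Uhl, Chapter II, and your sketches are essentially the standard proofs found there (the diagonal argument for (i), Pettis's measurability theorem for (ii), the truncation-plus-dominated-convergence argument for (iii), and the reduction to simple functions for (iv) and (v)). The only points that deserve to be made explicit are that your choice of $j_k$ in (i) uses the finiteness of $\mu$ (almost everywhere convergence gives convergence in measure only on finite measure spaces, which is the standing hypothesis here), and that the ``partition according to the best approximant'' step in (ii) first produces a countably-valued measurable function, which must then be reduced to genuinely simple functions by discarding a set of small measure --- again using that $\mu$ is finite.
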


\section{Main theorem}

Throughout the rest of the paper, we assume that $(\Omega,\Sigma,\mu)$
is a finite measure space, $n\in\mathbb N$, and
$f\colon\mathbb R^n\times\Omega\to\mathbb R$ is such that
$f(\cdot,\omega)$ is in $H_n$ for every $\omega\in\Omega$ and
$f(\bs,\cdot)$ is integrable for every $\bs\in\mathbb R^n$. For every
$\bs\in\mathbb R^n$, put $F(\bs)=\int f(\bs,\omega)d\mu(\omega)$. It
is clear that $F$ is positively homogeneous.

Suppose, in addition, that $F$ is continuous. Let $X$ be a Banach
lattice, $\bx\in X^n$, and $\Phi\colon H_n\to X$ the corresponding
function calculus. Since $F\in H_n$, $\widetilde{F}=F(\bx)=\Phi(F)$ is
defined as an element of~$X$.  On the other hand, for every~$\omega$,
the function $\bs\in\mathbb R^n\mapsto f(\bs,\omega)$ is in~$H_n$,
hence we may apply $\Phi$ to it. We denote the resulting vector by
$\tilde{f}(\omega)$ or $f(\bx,\omega)$. This produces a function
$\omega\in\Omega\mapsto f(\bx,\omega)\in X$.

\begin{theorem}\label{CK}
  Suppose that $F$ is continuous and the function
  $M(\omega):=\bignorm{f(\cdot,\omega)}_{C(S_\infty^n)}$ is
  integrable. Then $f(\bx,\omega)$ is Bochner integrable as a function
  of $\omega$ and
  \begin{math}
    F(\bx)=\int f(\bx,\omega)d\mu(\omega),
  \end{math}
  where the right hand side is a Bochner integral.
\end{theorem}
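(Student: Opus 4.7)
The plan is to verify Bochner measurability of $\omega \mapsto f(\bx,\omega)$ via Theorem~\ref{Boch}\eqref{Boch-norming}, deduce integrability from \eqref{FC-norms}, and finally identify the integral with $F(\bx)$ by testing against an arbitrary $x^* \in X^*$ and interchanging integrals with Fubini.

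First I would note that $f$ is jointly measurable on $\mathbb R^n \times \Omega$: it is a Carath\'eodory function (continuous in $\bs$, measurable in $\omega$), and any Carath\'eodory function is Borel/$\Sigma$-measurable by the standard approximation by functions constant on a countable partition of $\mathbb R^n$. For separable-valuedness, observe that $f(\bx,\omega) = \Phi\bigl(f(\cdot,\omega)\bigr)$ lies in the closed sublattice generated by $x_1,\dots,x_n$, which the excerpt has already noted is separable. For weak measurability, fix $x^*\in X^*$; then $x^*\circ\Phi\colon C(S_\infty^n)\to\mathbb R$ is bounded linear, so by Riesz representation there is a signed Borel measure $\nu_{x^*}$ on $S_\infty^n$ with $(x^*\circ\Phi)(g)=\int g\,d\nu_{x^*}$. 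Applying this to $g=f(\cdot,\omega)$ yields
\begin{displaymath}
  x^*\bigl(f(\bx,\omega)\bigr) = \int_{S_\infty^n} f(\bs,\omega)\,d\nu_{x^*}(\bs),
\end{displaymath}
and Fubini--Tonelli, justified by $\iint \abs{f(\bs,\omega)}\,d\abs{\nu_{x^*}}(\bs)\,d\mu(\omega) \le \abs{\nu_{x^*}}(S_\infty^n)\int M\,d\mu < \infty$, makes this measurable in $\omega$. Thus Theorem~\ref{Boch}\eqref{Boch-norming}, applied with the trivially norming set $\Gamma = X^*$, shows that $f(\bx,\cdot)$ is measurable; the estimate $\norm{f(\bx,\omega)} \le M(\omega)\cdot\bignorm{\bigvee_i\abs{x_i}}$ coming from \eqref{FC-norms} then yields Bochner integrability via Theorem~\ref{Boch}\eqref{Boch-int}.

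To conclude, set $\tilde F = \int f(\bx,\omega)\,d\mu(\omega)$. For any $x^*\in X^*$, Theorem~\ref{Boch}\eqref{Boch-op} together with Fubini in the opposite order gives
\begin{displaymath}
  x^*(\tilde F) = \int x^*\bigl(f(\bx,\omega)\bigr)\,d\mu(\omega)
  = \int_{S_\infty^n}\!\!\int_{\Omega} f(\bs,\omega)\,d\mu(\omega)\,d\nu_{x^*}(\bs)
  = \int F(\bs)\,d\nu_{x^*}(\bs) = x^*\bigl(F(\bx)\bigr),
\end{displaymath}
so $\tilde F = F(\bx)$ by Hahn--Banach. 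The main obstacle is the weak-measurability step: one has to pass through $C(S_\infty^n)$ via the Riesz representation of $x^*\circ\Phi$ and invoke Fubini, which in turn depends essentially on the joint (Carath\'eodory) measurability of $f$ and on the integrable majorant $M$ supplied by the hypothesis. Once that step is in place, the equality of $\tilde F$ and $F(\bx)$ is bookkeeping with Theorem~\ref{Boch} and a second application of Fubini.
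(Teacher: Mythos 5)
Your argument is correct, but it takes a genuinely different route from the paper's. The paper reduces to the special case $X=C(K)$ via the Kakutani representation of the principal ideal $I_e$, where Krivine's calculus becomes pointwise composition; there the norming set for Theorem~\ref{Boch}\eqref{Boch-norming} consists of the point evaluations $\varphi_k$, and $\varphi_k\bigl(\tilde f(\omega)\bigr)=f\bigl(x_1(k),\dots,x_n(k),\omega\bigr)$ is measurable directly from the hypothesis that $f(\bs,\cdot)$ is integrable for each fixed $\bs$ --- no Fubini and no joint measurability of $f$ are needed; the identity $\int\tilde f\,d\mu=\widetilde{F}$ is then read off pointwise on $K$ and transported to $X$ by the bounded inclusion $I_e\hookrightarrow X$ using Theorem~\ref{Boch}\eqref{Boch-op}. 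You instead stay in $X$, take $\Gamma=X^*$, and represent each $x^*\circ\Phi$ as a regular signed Borel measure $\nu_{x^*}$ on $S_\infty^n$; this forces you to establish joint (Carath\'eodory) measurability of $f$ and to invoke Fubini--Tonelli twice, once for weak measurability and once for the identification $x^*(\tilde F)=x^*\bigl(F(\bx)\bigr)$ via Hahn--Banach. Both routes are sound: yours avoids the representation theory of $I_e$ entirely, at the cost of Riesz representation plus Fubini, while the paper's avoids product-measurability considerations altogether and is somewhat more economical. Two small points worth making explicit in your write-up: the boundedness of $x^*\circ\Phi$ on $C(S_\infty^n)$ is exactly~\eqref{FC-norms}; and in the final Fubini step the inner integral $\int_\Omega f(\bs,\omega)\,d\mu(\omega)$ equals $F(\bs)$ for \emph{every} $\bs$ (not merely $\nu_{x^*}$-almost every $\bs$) because $f(\bs,\cdot)$ is assumed integrable for every $\bs$, which is what legitimizes $\int F\,d\nu_{x^*}=(x^*\circ\Phi)(F)=x^*\bigl(F(\bx)\bigr)$.
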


\begin{proof}
  \emph{Special case:} $X=C(K)$ for some compact Hausdorff $K$. By
  uniqueness of function calculus, Krivine's function calculus $\Phi$
  agrees with ``point-wise'' function calculus. In particular,
  \begin{displaymath}
    \widetilde{F}(k)=F\bigl(x_1(k),\dots,x_n(k)\bigr)\mbox{ and }
    \bigl(\tilde f(\omega))(k)=f\bigl(x_1(k),\dots,x_n(k),\omega\bigr)
  \end{displaymath}
  for all $k\in K$ and $\omega\in\Omega$. We view
  $\tilde f$ as a function from $\Omega$ to $C(K)$.

  We are going to show that $\tilde f$ is Bochner integrable.  It
  follows from $\tilde f(\omega)\in\Range\Phi$ that $\tilde f$ a
  separable-valued function. For every $k\in K$, consider the
  point-evaluation functional $\varphi_k\in C(K)^*$ given by
  $\varphi_k(x)=x(k)$. Then
  \begin{displaymath}
    \varphi_k\bigl(\tilde f(\omega)\bigr)=\bigl(\tilde f(\omega))(k)
    =f\bigl(x_1(k),\dots,x_n(k),\omega\bigr).
  \end{displaymath}
  for every $k\in K$. By assumptions, this function is integrable; in
  particular, it is measurable. Since the set
  \begin{math}
    \bigl\{\varphi_k\mid k\in K\bigr\}
  \end{math}
  is norming in $C(K)^*$, Theorem~\ref{Boch}\eqref{Boch-norming}
  yields that $\tilde f$ is measurable.

  Clearly,
  \begin{math}
    \bigabs{\bigl(\tilde f(\omega)\bigr)(k)}\le M(\omega)
  \end{math}
  for every $k\in K$ and $\omega\in\Omega$, so that
  $\norm{\tilde f(\omega)}_{C(K)}\le M(\omega)$ for every $\omega$. It
  follows that
  \begin{math}
    \int\norm{\tilde f(\omega)}_{C(K)}\, d\mu(\omega)
  \end{math}
  exists and, therefore, $\tilde f$ is Bochner integrable by
  Theorem~\ref{Boch}\eqref{Boch-int}.

  Put $h:=\int\tilde f(\omega)\,d\mu(\omega)$, where the right hand side is
  a Bochner integral.  Applying Theorem~\ref{Boch}\eqref{Boch-op}, we
  get
  \begin{multline*}
    h(k)=\varphi_k(h)
    =\int\varphi_k\bigl(\tilde f(\omega)\bigr)\,d\mu(\omega)
    =\int f\bigl(x_1(k),\dots,x_n(k),\omega\bigr)\,d\mu(\omega)\\
    =F\bigl(x_1(k),\dots,x_n(k)\bigr)=\widetilde{F}(k).
  \end{multline*}
  for every $k\in K$. It follows that $\int\tilde
  f(\omega)\,d\omega=\widetilde{F}$.

  \emph{General case.} Let $e=\abs{x_1}\vee\dots\abs{x_n}$. Then
  $\bigl(I_e,\norm{\cdot}_e\bigr)$ is lattice isometric to $C(K)$ for
  some compact Hausdorff $K$. Note also that $\abs{x}\le\norm{x}_ee$
  for every $x\in I_e$; this yields $\norm{x}\le\norm{x}_e\norm{e}$,
  hence the inclusion map
  $T\colon \bigl(I_e,\norm{\cdot}_e\bigr)\to X$ is
  bounded. Identifying $I_e$ with $C(K)$, we may view $T$ as a bounded
  lattice embedding from $C(K)$ into $X$.

  By the construction on Krivine's Function Calculus, $\Phi$ actually
  acts into $I_e$, i.e., $\Phi=T\Phi_0$, where $\Phi_0$ is the
  $C(K)$-valued function calculus. By the special case, we know that
  $\int\tilde f(\omega)\,d\mu(\omega)=\widetilde{F}$ in
  $C(K)$. Applying $T$, we obtain the same identity in $X$ by
  Theorem~\ref{Boch}\eqref{Boch-op}.
\end{proof}

Finally, we analyze whether any of the assumptions may be
removed. Clearly, one cannot remove the assumption that $F$ is
continuous; otherwise, $\widetilde{F}$ would make no sense. The
following example shows that, in general, $F$ need not be continuous.

\begin{example}
  Let $n=2$, let $\mu$ be a measure on $\mathbb N$ given by
  $\mu\bigl(\{k\}\bigr)=2^{-k}$. For each $k$, we define
  $f_k=f(\cdot,k)$ as follows. Note that it suffices to
  define $f_k$ on $S^2_\infty$. Let $I_k$ be the straight
  line segment connecting $(1,0)$ and
  $(1,2^{-k+1})$. Define $f_k$ so that it vanishes
  on $S^2_\infty\setminus I_k$,
  $f_k(1,0)=f_k(1,2^{-k+1})=0$,
  $f_k(1,2^{-k})=2^k$, and is linear on each half of
  $I_k$. Then $f_k\in H_2$ and $F(\bs)$ is defined for every
  $\bs\in\mathbb R^2$. It follows from
  $F(\bs)=\sum_{k=1}^\infty 2^{-k}f_k(\bs)$ that $F(1,0)=0$ and
  $F(1,2^{-k})\ge 2^{-k}f_k(1,2^{-k})=1$, hence $F$ is discontinuous at
  $(1,0)$.
\end{example}

The assumption that $M$ is integrable cannot be removed
as well. Indeed, consider the special case when $X=C(S_\infty^n)$ and
$x_i=\pi_i$ as $i=1,\dots,n$. In this case, $\Phi$ is the identity map
and $\tilde f(\omega)=f(\cdot,\omega)$. It follows from
Theorem~\ref{Boch}\eqref{Boch-int} that $\tilde{f}$ is Bochner
integrable iff $\norm{\tilde{f}}$ is integrable iff $M$ is integrable.

Finally, the assumption that $f(\cdot,\omega)$ is in $H_n$ for every
$\omega$ may clearly be relaxed to ``for almost every $\omega$''.

\section{Direct proof}

In the previous section, we presented a proof of Theorem~\ref{CK}
using representation theory. In this section, we present a direct
proof. However, we impose an additional assumption: we assume
that $f(\cdot,\omega)$ is continuous on $S_\infty^n$ uniformly
on~$\omega$, that is,
\begin{multline}
  \label{ucont}
  \mbox{for every }\varepsilon>0\mbox{ there exists }\delta>0
  \mbox{ such that }
  \bigabs{f(\bs,\omega)-f(\bt,\omega)}<\varepsilon\\
  \mbox{ for all }\bs,\bt\in S_\infty^n
  \mbox{ and all }\omega\in\Omega
  \mbox{ provided that }
  \norm{\bs-\bt}_\infty<\delta.  
\end{multline}

In Theorem~\ref{CK}, we assumed that $F$ was continuous and $M$ was
integrable. Now these two conditions are satisfied automatically.  In
order to see that $F$ to is continuous, fix $\varepsilon>0$; let
$\delta$ be as in~\eqref{ucont}, then
\begin{equation}
  \label{FsFt}
  \bigabs{F(\bs)-F(\bt)}
  \le\int\bigabs{f(\bs,\omega)-f(\bt,\omega)}
  d\mu(\omega)<\varepsilon\mu(\Omega)
\end{equation}
whenever $\bs,\bt\in S_\infty^n$ with $\norm{\bs-\bt}_\infty<\delta$.
The proof of integrability of $M$ will be included in the proof of the
theorem.

\begin{theorem}\label{direct}
  Suppose that $f(\cdot,\omega)$ is continuous on $S_\infty^n$
  uniformly on~$\omega$. Then $f(\bx,\omega)$ is Bochner integrable as
  a function of $\omega$ and
  \begin{math}
    F(\bx)=\int f(\bx,\omega)d\mu(\omega).
  \end{math}
\end{theorem}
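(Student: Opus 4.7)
The plan is to approximate $f(\cdot,\omega)$ simultaneously for all $\omega$ by elements of $H_n$ of the form $\sum_j u_j(\omega)\phi_j$, where the $\phi_j\in H_n$ are fixed and the $u_j$ are integrable scalar functions on~$\Omega$. Applying $\Phi$ turns each such approximation into a Bochner-integrable $X$-valued function via Theorem~\ref{Boch}\eqref{Boch-L1} and linearity, and a limiting argument then delivers both Bochner integrability of $\tilde f$ and the identity $\int\tilde f\,d\mu=\widetilde F$.

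First I would verify that $M$ is integrable. The uniform continuity \eqref{ucont} lets one replace the supremum in the definition of $M(\omega)$ by a supremum over any fixed countable dense subset of $S_\infty^n$, which makes $M$ measurable as a countable supremum of the measurable functions $\omega\mapsto\abs{f(\bs,\omega)}$. Choosing $\varepsilon=1$ and a finite $\delta$-net $\{\bt_1,\dots,\bt_N\}\subset S_\infty^n$ coming from \eqref{ucont} yields the pointwise bound $M(\omega)\le 1+\sum_{j=1}^N\abs{f(\bt_j,\omega)}$, which is integrable by hypothesis.

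Next, for an arbitrary $\varepsilon>0$, I would fix $\delta$ as in \eqref{ucont}, choose a finite $\delta$-net $\{\bt_1,\dots,\bt_N\}$ in the compact space $S_\infty^n$, and a continuous partition of unity $\phi_1,\dots,\phi_N$ on $S_\infty^n$ subordinate to the cover by open $\delta$-balls around the~$\bt_j$; extending each $\phi_j$ to $H_n$ by positive homogeneity, set
\begin{displaymath}
  g_\omega=\sum_{j=1}^N f(\bt_j,\omega)\phi_j\in H_n,
  \qquad
  G=\sum_{j=1}^N F(\bt_j)\phi_j\in H_n.
\end{displaymath}
Since $\sum_j\phi_j\equiv 1$ on $S_\infty^n$ and each $\phi_j$ is supported where $\norm{\cdot-\bt_j}_\infty<\delta$, the uniform continuity of $f(\cdot,\omega)$ and the estimate \eqref{FsFt} give $\norm{g_\omega-f(\cdot,\omega)}_{C(S_\infty^n)}\le\varepsilon$ for every~$\omega$ and $\norm{G-F}_{C(S_\infty^n)}\le\varepsilon\mu(\Omega)$. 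Writing $e=\abs{x_1}\vee\dots\vee\abs{x_n}$, inequality \eqref{FC-norms} transfers these to
\begin{displaymath}
  \bignorm{\Phi(g_\omega)-\tilde f(\omega)}\le\varepsilon\norm{e}
  \qquad\text{and}\qquad
  \bignorm{\Phi(G)-\widetilde F}\le\varepsilon\mu(\Omega)\norm{e}.
\end{displaymath}
Because $\Phi$ is linear, $\Phi(g_\omega)=\sum_j f(\bt_j,\omega)\Phi(\phi_j)$ is a finite sum of products of integrable scalars and fixed vectors; Theorem~\ref{Boch}\eqref{Boch-L1} and linearity then give that $\omega\mapsto\Phi(g_\omega)$ is Bochner integrable with integral $\sum_j F(\bt_j)\Phi(\phi_j)=\Phi(G)$.

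Finally, I would let $\varepsilon$ run through a sequence $\varepsilon_k\to 0$, producing approximants $\tilde f_k$ that converge to $\tilde f$ uniformly on~$\Omega$; Theorem~\ref{Boch}\eqref{Boch-lim} then gives measurability of $\tilde f$, and combined with $\norm{\tilde f(\omega)}\le M(\omega)\norm{e}$ (from \eqref{FC-norms}) and Theorem~\ref{Boch}\eqref{Boch-int} this gives Bochner integrability. The identity drops out from $\int\tilde f_k\,d\mu=\Phi(G_k)\to\widetilde F$ and $\bignorm{\int\tilde f_k\,d\mu-\int\tilde f\,d\mu}\le\varepsilon_k\mu(\Omega)\norm{e}\to 0$. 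I do not anticipate a serious obstacle: the key conceptual step is recognizing that the uniform continuity hypothesis is exactly what lets one use the \emph{same} partition of unity on $S_\infty^n$ to handle every~$\omega$ at once, moving all of the $\omega$-dependence into the scalar coefficients $f(\bt_j,\omega)$ to which Theorem~\ref{Boch}\eqref{Boch-L1} can be applied.
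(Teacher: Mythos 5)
Your proof is correct and follows essentially the same strategy as the paper's: approximate $f(\cdot,\omega)$ uniformly in $\omega$ by a finite-rank expression $\sum_j f(\bt_j,\omega)\phi_j$ with fixed functions $\phi_j\in H_n$, apply Theorem~\ref{Boch}\eqref{Boch-L1} to that approximant, and pass to the limit. The only difference is cosmetic: the paper realizes the $\phi_j$ as the hat functions of a piecewise-linear interpolation on a triangulation of $S_\infty^n$ rather than a general partition of unity subordinate to a $\delta$-net, and dominates $\abs{f(\bs,\omega)}$ by chaining through simplex vertices rather than by a single net point; your versions of both steps are, if anything, slightly simpler.
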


\begin{proof}
  Without
loss of generality, by scaling $\mu$ and~$\bx$, we may assume that
$\mu$ is a probability measure and
\begin{math}
  \Bignorm{\bigvee_{i=1}^n\abs{x_i}}=1;
\end{math}
this will simplify computations.  In particular,~\eqref{FC-norms} becomes
$\norm{H(\bx)}\le\norm{H}_{C(S_\infty^n)}$ for every
$H\in C(S_\infty^n)$. Note also that $\bx$ in the theorem is a
``fake'' variable as $\bx$ is fixed. It may be more accurate to write
$\widetilde{F}$ and $\tilde{f}(\omega)$ instead of $F(\bx)$ and
$f(\bx,\omega)$, respectively. Hence, we need to prove that $\tilde f$
as a function from $\Omega$ to $X$ is Bochner integrable and its Bochner
integral is~$\widetilde{F}$.

Fix $\varepsilon>0$. Let $\delta$ be as in~\eqref{ucont}.
It follows from~\eqref{FsFt} that
\begin{equation}
  \label{FsFteps}
   \bigabs{F(\bs)-F(\bt)}<\varepsilon
   \mbox{ whenever }\bs,\bt\in S_\infty^n
   \mbox{ with }\norm{\bs-\bt}_\infty<\delta.
\end{equation}
Each of the $2n$ faces of $S_\infty^n$ is a translate of the
$(n-1)$-dimensional unit cube $B_\infty^{n-1}$. Partition each of
these faces into $(n-1)$-dimensional cubes of diameter less
than~$\delta$, where the diameter is computed with respect to the
$\norm{\cdot}_\infty$-metric. Partition each of these cubes into
simplices. Therefore, there exists a partition of the entire
$S_\infty^n$ into finitely many simplices of diameter less
than~$\delta$. Denote the vertices of these simplices by
$\bs_1,\dots,\boldsymbol{s_m}$. Thus, we have produced a
triangularization of $S_\infty^n$ with nodes
$\bs_1,\dots,\boldsymbol{s_m}$.

Let $\ba\in\mathbb R^m$. Define a function $L\colon
S_\infty^n\to\mathbb R$ by setting $L(\boldsymbol{s_j})=a_j$ as
$j=1,\dots,m$ and then extending it to each of the simplices linearly;
this can be done because every point in a simplex can be written in a
unique way as a convex combination of the vertices of the simplex. We
write $L=T\ba$. This gives rise to a linear operator
$T\colon\mathbb R^m\to C(S_\infty^n)$. For each $j=1,\dots,m$, let
$e_j$ be the $j$-th unit vector in $\mathbb R^m$; put
$d_j=Te_j$. Clearly,
\begin{equation}
  \label{Ta}
  T\ba=\sum_{j=1}^ma_jd_j\mbox{ for every }\ba\in\mathbb R^m. 
\end{equation}

Let $H\in C(S_\infty^n)$. Let $L=T\ba$ where
$a_j=H(\boldsymbol{s_j})$. Then $L$ agrees with $H$ at
$\bs_1,\dots,\bs_m$. We write $L=SH$; this
defines a linear operator $S\colon C(S_\infty^n)\to
C(S_\infty^n)$. Clearly, this is a linear contraction.

Suppose that $H\in C(S_\infty^n)$ is such that
$\bigabs{H(\bs)-H(\bt)}<\varepsilon$ whenever
$\norm{\bs-\bt}_\infty<\delta$. Let $L=SH$. We claim that
$\bignorm{L-H}_{C(S_\infty^n)}<\varepsilon$. Indeed, fix
$\bs\in S_\infty^n$. Let $\bs_{j_1},\dots,\bs_{j_n}$ be the
vertices of a simplex in the triangularization of $S_\infty^n$ that
contains~$\bs$. Then $\bs$ can be written as a convex combination
$\bs=\sum_{k=1}^n\lambda_k\bs_{j_k}$. Note that
$\norm{\bs-\bs_{j_k}}_\infty<\delta$ for all $j=1,\dots,n$. It
follows that
\begin{displaymath}
  \bigabs{L(\bs)-H(\bs)}
  =\Bigabs{\sum_{k=1}^n\lambda_kL(\bs_{j_k})
    -\sum_{k=1}^n\lambda_kH(\bs)}
    \le\sum_{j=1}^n\lambda_k\bigabs{H(\bs_{j_k})-H(\bs)}
    <\varepsilon.
\end{displaymath}
This proves the claim.


Let $G=SF$. It follows from~\eqref{FsFteps} and the preceding observation 
$\norm{G-F}_{C(S_\infty^n)}<\varepsilon$, so that
\begin{equation}\label{GF}
  \bignorm{G(\bx)-F(\bx)}<\varepsilon.
\end{equation}
Similarly, for every $\omega\in\Omega$, apply $S$ to $f(\cdot,\omega)$
and denote the resulting function $g(\cdot,\omega)$. In particular,
$g(\bs_j,\omega)=f(\bs_j,\omega)$ for every $\omega\in\Omega$ and
every $j=1,\dots,m$. It follows also that
\begin{equation}
  \label{fgseps}
  \bignorm{f(\cdot,\omega)-g(\cdot,\omega)}_{C(S_\infty^n)}<\varepsilon
\end{equation}
for every~$\omega$, and, therefore
\begin{equation}
  \label{fgxeps}
  \bignorm{\tilde{f}(\omega)-\tilde{g}(\omega)}
  =\bignorm{f(\bx,\omega)-g(\bx,\omega)}<\varepsilon,
\end{equation}
where $\tilde{g}(\omega)=g(\bx,\omega)$ is the image under $\Phi$ of
the function $\bs\in S_\infty^n\mapsto g(\bs,\omega)$. Note that
\begin{equation}
  \label{Ggs}
  G(\bs_j)=F(\bs_j)=\int f(\bs_j,\omega)d\mu(\omega)
  =\int g(\bs_j,\omega)d\mu(\omega)
\end{equation}
for every $j=1,\dots,m$. Since $G=SF=T\ba$ where
$a_j=F(\bs_j)=G(\bs_j)$ as $j=1,\dots,m$, it follows from~\eqref{Ta}
that
\begin{equation}
  \label{Gsum}
  G=\sum_{j=1}^mG(\bs_j)d_j.
\end{equation}
Similarly, for every
$\omega\in\Omega$, we have
\begin{equation}
  \label{gsum}
  g(\cdot,\omega)=\sum_{j=1}^mg(\bs_j,\omega)d_j.
\end{equation}
Applying $\Phi$ to~\eqref{Gsum}
and~\eqref{gsum}, we obtain
$\widetilde{G}=G(\bx)=\sum_{j=1}G(\bs_j)d_j(\bx)$ and
\begin{displaymath}
  \tilde{g}(\omega)=g(\bx,\omega)
  =\sum_{j=1}^mg(\bs_j,\omega)d_j(\bx)
  =\sum_{j=1}^mf(\bs_j,\omega)d_j(\bx).
\end{displaymath}
Together with Theorem~\ref{Boch}\eqref{Boch-L1},
this yields that $\tilde{g}$ is measurable and Bochner
integrable.  It now follows from~\eqref{Ggs} and~\eqref{Gsum} that
\begin{multline}
  \label{Gg}
  G(\bx)=\sum_{j=1}G(\bs_j)d_j(\bx)
  =\sum_{j=1}^m\Bigl(\int g(\bs_j,\omega)d\mu(\omega)\Bigr)d_j(\bx)\\
  =\int\Bigl(\sum_{j=1}^mg(\bs_j,\omega)d_j(\bx)\Bigr)d\mu(\omega)
  =\int g(\bx,\omega)d\mu(\omega).
\end{multline}

We will show next that $\tilde{f}$ is Bochner integrable. It follows
from~\eqref{fgxeps} and the fact that $\varepsilon$ is arbitrary that
$\tilde{f}$ can be approximated almost everywhere (actually,
everywhere) by measurable functions; hence $\tilde{f}$ is measurable
by Theorem~\ref{Boch}\eqref{Boch-lim}. Next, we claim that there
exists $\lambda\in\mathbb R_+$ such that
$\bigabs{f(\bs,\omega)-f(\boldsymbol{1},\omega)}\le\lambda$ for all
$\bs\in S_\infty^n$ and all $\omega\in\Omega$. Here
$\boldsymbol{1}=(1,\dots,1)$. Indeed, let $\bs\in S_\infty^n$ and
$\omega\in\Omega$. Find $j_1,\dots,j_l$ such that
$\bs_{j_1}=\boldsymbol{1}$, $\bs_{j_k}$ and $\bs_{j_{k+1}}$ belong to
the same simplex as $k=1,\dots,l-1$, and $\bs_{j_l}$ is a vertex of a
simplex containing~$\bs$. It follows that
\begin{displaymath}
  \bigabs{f(\bs,\omega)-f(\boldsymbol{1},\omega)}
    \le\bigabs{f(\bs,\omega)-f(\bs_{j_l},\omega)}
    +\sum_{k=1}^{l-1}\bigabs{f(\bs_{j_{k+1}},\omega)-f(\bs_{j_k},\omega)}
    \le l\varepsilon\le m\varepsilon.
\end{displaymath}
This proves the claim with $\lambda=m\varepsilon$. It follows that
\begin{displaymath}
  \bignorm{\tilde{f}(\omega)}
  \le\bignorm{f(\cdot,\omega)}_{C(S_\infty^n)}
  =\sup_{\bs\in S_\infty^n}\bigabs{f(\bs,\omega)}
  \le\bigabs{f(\boldsymbol{1},\omega)}+\lambda.
\end{displaymath}
Since $\bigabs{f(\boldsymbol{1},\omega)}+\lambda$ is an integrable function
of~$\omega$, we conclude that $\norm{\tilde{f}}$ is integrable, hence
$\tilde{f}$ is Bochner integrable by
Theorem~\ref{Boch}\eqref{Boch-int}. It now follows from~\eqref{fgxeps}
that
\begin{equation}
  \label{fg}
  \Bignorm{\int f(\bx,\omega)d\mu(\omega)
           -\int g(\bx,\omega)d\mu(\omega)}
  \le\int\bignorm{f(\bx,\omega)-g(\bx,\omega)}d\mu(\omega)
  <\varepsilon
\end{equation}

Finally, combining~\eqref{GF}, \eqref{Gg}, and~\eqref{fg}, we get
\begin{displaymath}
  \Bignorm{F(\bx)-\int f(\bx,\omega)d\mu(\omega)}< 2\varepsilon.
\end{displaymath}
Since $\varepsilon>0$ is arbitrary, this proves the theorem.
\end{proof}

Some of the work on this paper was done during a visit of the second
author to the University of Alberta. We would like to thank the
referee whose helpful remarks and suggestions considerably improved
this paper.

\end{document}